\newtheorem{theorem}{Theorem}[section]
\newtheorem{corollary}[theorem]{Corollary}
\theoremstyle{definition}
\newtheorem{remark}[theorem]{Remarks}
\DeclareMathOperator{\re}{Re}
\DeclareMathOperator{\pco}{pco}
\numberwithin{equation}{section}
\begin{document}

	\title[]{Applications of the Growth Characteristics Induced by the Spectral Distance}
	\author{R. Brits}
	\address{Department of Mathematics, University of Johannesburg, South Africa}
	\email{rbrits@uj.ac.za }
	\subjclass[2010]{46H05, 46L05, 47B47}
	\keywords{asymptotically intertwined, commutator, spectral distance, quasinilpotent equivalent}

\begin{abstract}
Let $A$ be a complex unital Banach algebra. Using a connection between the spectral distance and the growth characteristics of a certain entire map into $A$, we derive a generalization of Gelfand's famous Power Boundedness Theorem. Elaborating on these ideas, with the help of a Phragm\'{e}n-Lindel\"{o}f device for subharmonic functions, it is then shown, as the main result, that two normal elements of a $C^*$-algebra are equal if and only if they are quasinilpotent equivalent.
\end{abstract}
	\parindent 0mm
	
	\maketitle

\section{Introduction}
Asymptotically intertwined operators on Banach spaces, and the associated notions of the spectral distance and quasinilpotent equivalence, are  well-established topics in the context of local spectral theory. Standard references (by now) include \cite{c+f, f+v, l+n, vas3, vas1, vas2}, and some very recent ones \cite{duggal, duggal2}. The aim of this paper is to deviate a little from this approach, and to see what can be obtained from a more global perspective on these matters. It is therefore natural to revert to the situation of a complex unital Banach algebra. Throughout we shall rely on connections with the classical theory of growth of entire functions (which can be suitably extended to Banach algebra valued functions via the separation properties of the Hahn Banach Theorem). This approach, though somewhat unexplored in general, was also considered in \cite{frunza} by Frunz\v a who obtained some very neat results for the case of Hilbert space operators. A generalization of asymptotically intertwined operators, the spectral distance, and quasinilpotent equivalence to abstract Banach algebras (\cite{razpet}) is quite obvious: Let $A$ denote a complex Banach algebra with identity $\mathbf 1$. For $a,b\in A$ associate operators $L_a$, $R_b$, and $C_{a,b}$, acting on $A$, by
the relations
\[L_ax=ax, \quad R_bx=xb,\quad \hbox{and}\quad C_{a,b}x=(L_a-R_b)x\quad (x\in A).\]
Since $L_a$ and $R_b$ commute it is easy that
\[C_{a,b}^nx=\sum_{k=0}^n(-1)^k{n \choose k}a^{n-k}xb^k\quad (x\in A),\] with the convention that, if $0\not=a\in A$,
then $a^0=\mathbf 1$.
Using the particular value $x=\mathbf 1$, define $\rho:A\times A\rightarrow \mathbb R$ by
\begin{equation}\label{varrho}
\rho (a,b)=\limsup_n\left\| C_{a,b}^n\mathbf 1\right\| ^{1/n},
\end{equation}
and then define
\begin{equation}\label{rho}
d_\rho(a,b)=\sup \{\rho (a,b),\rho (b,a)\}.
\end{equation}
The function $d_\rho$, which defines a semimetric on $A$, is a noncommutative generalization of the semimetric induced by the spectral radius in the commutative case. That is, $d_\rho(a,b)=\lim_n\|(a-b)^n\|^{1/n}$ provided $ab=ba$.
If $X$ is a Banach space, and $S,T\in A:=\mathcal L(X)$, the Banach algebra of bounded linear operators from $X$ into $X$, then the number $\rho(S,T)$ is a well-known quantity called the \emph{local spectral radius} \cite[p.235]{l+n} of the commutator $C_{S,T}\in\mathcal L(A)$ at $I$. The number $d_\rho(S,T)$ is called the \emph{spectral distance} \cite[p.251]{l+n} of the operators $S$ and $T$. Furthermore, the (ordered) pair $(S,T)$ is said to be \emph{asymptotically intertwined} \cite[p.248]{l+n} by the identity, $I$, if $\rho(S,T)=0$. If each of the pairs $(S,T)$ and $(T,S)$ is asymptotically intertwined by the identity operator (i.e. $d_\rho(S,T)=0$), then $S$ and $T$ are called \emph{quasinilpotent equivalent} \cite[p.253]{l+n}.  We shall adopt the operator terminology for $a,b$ in a general Banach algebra $A$ even though $d_\rho$ is (more accurately) called the \emph{spectral semidistance} in \cite{razpet} (see also the comments preceding Proposition 3.4.9 in \cite{l+n}). Since one may always embed $A$ isometrically into $\mathcal L(X)$ for some Banach space $X$, it follows from \cite[Proposition 3.4.11]{l+n} that $d_\rho(a,b)=0$ implies equality of the spectra of $a$ and $b$. This will not be particularly useful to us, since most of our results rely only on the weaker  $\rho(a,b)=0$. The following very simple facts are useful for this paper: $\rho(\alpha a,\alpha b)=|\alpha|\rho(a,b)$ for $\alpha\in\mathbb C$; $\rho(a,b)=\rho(a+c,b+c)$ for any $c\in A$ commuting with both $b$ and $a$; any two quasinilpotent elements are quasinilpotent equivalent.

The methods employed in this paper stem from a curious connection between $\rho$ and the growth characteristics of a certain entire map from $\mathbb C$ into $A$. We recall the following definitions: Let $f$ be an entire function from $\mathbb C$ into $A$. Then $f$ has an everywhere convergent power series expansion
\[f(\lambda)=\sum_{n=0}^\infty a_n\lambda^n,\] with coefficients $a_n$ belonging to $A$. Next define a function \[M_f(r)=\sup\limits_{|\lambda|\leq r}\|f(\lambda)\|\quad (0<r\in\mathbb R).\] The function $f$ is said to be of \emph{finite order} if there exist $K>0$ and $R>0$ such that $M_f(r)<e^{r^K}$ holds for all $r>R$. The infimum of the set of positive real numbers, $K$, such that the preceding inequality holds is called the \emph{order} of $f$, denoted by $\omega_f$. If $\omega_f=1$ then $f$ is said to be of \emph{exponential order}. Suppose $f$ is entire, and of finite order $\omega:=\omega_f$. Then $f$ is said to be of \emph{finite type} if there exist $L>0$ and $R>0$ such that $M_f(r)<e^{Lr^{\omega}}$ holds for all $r>R$. The infimum of the set of positive real numbers, $L$, such that the preceding inequality holds is called the \emph{type} of $f$, denoted by $\tau_f$.
\begin{remark}
	We should point out that the terminology ``exponential order" is somewhat nonstandard; the phrase ``exponential type", which is more commonly used, usually signifies (i) $\omega_f<1$ and $\tau_f\leq\infty$, or (ii) $\omega_f=1$ and $\tau_f<\infty$.
\end{remark}
It is known (see the monograph \cite[p.41]{levin}) that the order and type of an entire function are given by the respective formulas
\[\omega_f=\limsup_n\left(\frac{n\log n}{\log\|a_n\|^{-1}}\right)\mbox{ and }\tau_f=\frac{1}{e\omega_f}\limsup_n\left(n\sqrt[n]{{\|a_n\|^{\omega_f}}}\right).\]
If, for example, $f$ is of order $0$ and finite type, then it follows from the definition (via Liouville's Theorem) that $f$ must be constant; but there are other, more interesting, scenarios involving order and type which may also force this (cf. Theorem~\ref{POL}).
Let $a,b\in A$, and define
\begin{equation}\label{efun}
f:\lambda\mapsto e^{\lambda a}e^{-\lambda b},\quad (\lambda\in\mathbb C).
\end{equation}
The corresponding series expansion, valid for all $\lambda\in\mathbb C$, is given by
\begin{equation}\label{FE}
f(\lambda)=e^{{\lambda}a}e^{-{\lambda}b}=\sum_{n=0}^\infty\frac{{\lambda}^nC_{a,b}^n\mathbf 1}{n!}.
\end{equation}
Since $\|f(\lambda)\|\leq e^{\left(\|a\|+\|b\|\right)|\lambda|}$, for all $\lambda\in\mathbb C$, it is immediate, from the definition, that $\omega_f\leq1$. Suppose we know that $f$ is of exponential order (i.e. $\omega_f=1$). Using Stirling's formula it follows that $\lim_nn(1/n!)^{1/n}=e$, from which we subsequently obtain
\[\tau_f=\frac{1}{e}\limsup_n\left(n(1/n!)^{1/n}\left\| C_{a,b}^n\mathbf 1\right\| ^{1/n}\right)=\rho(a,b).\]
In conclusion, the function $f$ defined in \eqref{efun} exhibits the following growth characteristics: either $w_f<1$, or $w_f=1$ in which case $\tau_f=\rho(a,b)$;
this observation will be crucial throughout the remainder of this paper. Furthermore we shall adopt the following notation: If $A^{-1}$ denotes the invertible group of $A$, and $x\in A$ then  $\sigma_A(x):=\{\lambda\in\mathbb C:\lambda\mathbf 1-x\not\in A^{-1}\}$ denotes the spectrum of $x$, and $r_{\sigma_A}(x):=\sup\{|\lambda|:\lambda\in\sigma_A(x)\}=\lim_n\|x^n\|^{1/n}$ the spectral radius of $x$. If the algebra under discussion is clear from the context, we may omit the subscript $A$ in the aforementioned notation. If $K$ is a compact subset of $\mathbb C$, then $\pco(K)$ is the polynomially convex hull of $K$.

\section{ Quasinilpotent equivalence and power boundedness}

We start with a useful result  which implies that quasinilpotent equivalence is preserved under the Holomorphic Functional Calculus. Theorem~\ref{FCT} extends \cite[Proposition 4]{ frunza}.
\begin{theorem}\label{FCT} Let $A$ be a Banach algebra, $a,b\in A$, and let $f$ be holomorphic on an open set containing $\sigma(a)\cup\sigma(b)$. Then
	\begin{equation}\label{FCE}
	\rho(f(a),f(b))\leq\sup_{\omega\in\sigma(b)}\left\{\sup_{\substack{\lambda\in\sigma(a)\\ \lambda\not=\omega}}\left|\frac{f(\lambda)-f(\omega)}{\lambda-\omega}\right|,|f^\prime(\omega)| \right\}\cdot\rho(a,b)
	\end{equation}
	In particular if $d_\rho(a,b)=0$ (so that $\sigma(a)=\sigma(b)$), and $f$ is analytic on an open set $U$ containing $\sigma(a)$, then $d_\rho(f(a),f(b))=0$.
\end{theorem}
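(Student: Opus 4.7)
The plan is to factor the commutator $C_{f(a),f(b)} = L_{f(a)} - R_{f(b)}$ through $C_{a,b} = L_a - R_b$ inside a commutative subalgebra of $\mathcal L(A)$, using a divided-difference operator $G$ so that $C_{f(a),f(b)} = C_{a,b}\cdot G$ with $C_{a,b}$ and $G$ commuting. Applying this to $\mathbf 1$ and iterating yields $C_{f(a),f(b)}^n\mathbf 1 = G^n(C_{a,b}^n\mathbf 1)$, hence $\|C_{f(a),f(b)}^n\mathbf 1\|^{1/n}\le\|G^n\|^{1/n}\|C_{a,b}^n\mathbf 1\|^{1/n}$; passing to $\limsup$ then gives $\rho(f(a),f(b))\le r_\sigma(G)\cdot\rho(a,b)$, and the scalar prefactor in \eqref{FCE} will emerge from a spectral-mapping bound on $r_\sigma(G)$.

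For the construction of $G$: since $L_a$ and $R_b$ commute in $\mathcal L(A)$, their double commutant $B := \{L_a,R_b\}''\subset\mathcal L(A)$ is a closed, unital, commutative and inverse-closed subalgebra; in particular $\sigma_B(L_a)=\sigma(a)$ and $\sigma_B(R_b)=\sigma(b)$. Extending the divided difference across the diagonal by $F(\mu,\mu):=f'(\mu)$ yields a function $F$ holomorphic on $U\times U$, where $U$ is the domain of $f$. I would then define
\[
G \;:=\; \frac{1}{2\pi i}\int_\Gamma f(\zeta)(\zeta-L_a)^{-1}(\zeta-R_b)^{-1}\,d\zeta \;\in\; B,
\]
for a contour $\Gamma\subset U$ around $\sigma(a)\cup\sigma(b)$ (this is the two-variable functional-calculus representative of $F(L_a,R_b)$, but the contour formula keeps $G$ manifestly in the commutative subalgebra generated by $L_a$, $R_b$ and their resolvents). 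The resolvent identity $(\zeta-L_a)^{-1}-(\zeta-R_b)^{-1} = (\zeta-L_a)^{-1}(L_a-R_b)(\zeta-R_b)^{-1}$ (valid since $L_aR_b = R_bL_a$), combined with the standard contour representations of $L_{f(a)}=f(L_a)$ and $R_{f(b)}=f(R_b)$, reads off the algebraic factorization $L_{f(a)}-R_{f(b)} = (L_a-R_b)\cdot G$ inside $B$; commutativity of $B$ makes $C_{a,b}$ and $G$ commute.

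With the $n$-th root estimate already in hand, the spectral-mapping theorem in the commutative algebra $B$ finally gives
\[
r_\sigma(G) = \sup_{\chi\in\widehat B}|F(\chi(L_a),\chi(R_b))| \le \sup_{(\lambda,\omega)\in\sigma(a)\times\sigma(b)}|F(\lambda,\omega)|,
\]
which, after separating the diagonal contributions as $|f'(\omega)|$, is bounded above by the scalar prefactor in \eqref{FCE}. The ``in particular'' assertion is immediate: when $d_\rho(a,b)=0$ the spectra agree, $|F|$ is bounded on the compact $\sigma(a)\times\sigma(b)$, and this finite bound times $\rho(a,b)=0$ yields $\rho(f(a),f(b))=0$; exchanging the roles of $a$ and $b$ gives $d_\rho(f(a),f(b))=0$. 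The one delicate point I anticipate is the bookkeeping needed to ensure everything happens in a genuinely \emph{commutative} subalgebra where the spectra of $L_a$ and $R_b$ remain $\sigma(a)$ and $\sigma(b)$; the double-commutant $B=\{L_a,R_b\}''$ (or equivalently the explicit contour formula for $G$) is what handles this cleanly.
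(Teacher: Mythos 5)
Your proposal is correct and follows essentially the same route as the paper's proof: the same contour-integral factorization $L_{f(a)}-R_{f(b)}=(L_a-R_b)\,G$ via the resolvent identity for the commuting pair $L_a,R_b$, the same passage to the bound $r_\sigma(G)\,\rho(a,b)$, and the same Gelfand/character computation in the bicommutant of $\{L_a,R_b\}$ identifying $r_\sigma(G)$ with the supremum of the (continuously extended) divided difference over $\sigma(a)\times\sigma(b)$. The paper simply calls your $G$ by the name $H(L_a,R_b)$ and, like you, invokes the continuity of the extended divided difference to get finiteness of the prefactor.
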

\begin{proof} Let $\Gamma$ be a contour in $U$ surrounding $\sigma(a)\cup\sigma(b)$ (in the sense of \cite[10.23, p.259--260]{rudin1}). Since the map $x\mapsto L_x$
	isomorphically embeds $A$ onto a closed subalgebra of $\mathcal L(A)$ it follows that
	\[f(L_a)=\frac{1}{2\pi\,i}\int\limits_\Gamma f(\lambda)(\lambda I-L_a)^{-1}d\lambda=L_{f(a)}.\]
	Similarly
	\[f(R_b)=\frac{1}{2\pi\,i}\int\limits_\Gamma f(\lambda)(\lambda I-R_b)^{-1}d\lambda=R_{f(b)}.\]
	Now, because $L_a$ and $R_b$ commute, we have
	\begin{align*}
	L_{f(a)}-R_{f(b)}&=\frac{1}{2\pi\,i}\int\limits_\Gamma f(\lambda)
	\left[(\lambda I-L_a)^{-1}-(\lambda I-R_b)^{-1}\right]d\lambda\\&=
	\frac{1}{2\pi\,i}\int\limits_\Gamma f(\lambda)
	\left[(\lambda I-L_a)^{-1}(\lambda I-R_b)^{-1}(L_a-R_b)\right]d\lambda\\&=
	\left(\frac{1}{2\pi\,i}\int\limits_\Gamma f(\lambda)
	\left[(\lambda I-L_a)^{-1}(\lambda I-R_b)^{-1}\right]d\lambda\right)\left(L_a-R_b\right)\\&=
	\left(L_a-R_b\right)\left(\frac{1}{2\pi\,i}\int\limits_\Gamma f(\lambda)
	\left[(\lambda I-L_a)^{-1}(\lambda I-R_b)^{-1}\right]d\lambda\right).
	\end{align*}
	The preceding argument shows that $L_{f(a)}-R_{f(b)}$ can factored as
	$L_{f(a)}-R_{f(b)}=H(L_a,R_b)[L_a-R_b]$ where  $H(L_a,R_b)\in\mathcal L(A)$ commutes with $L_a$ and $R_b$.
	Thus
	\begin{align*}
	\rho(f(a),f(b))&=\limsup_n\|(L_{f(a)}-R_{f(b)})^n\mathbf 1\|^{1/n}\\&=
	\limsup_n\|H^n(L_a,R_b)[L_a-R_b]^n\mathbf 1\|^{1/n}\\&\leq
	\limsup_n\|H^n(L_a,R_b)\|^{1/n}\,\|[L_a-R_b]^n\mathbf 1\|^{1/n}\\&=
	r_{\sigma}\left(H(L_a,R_b)\right)\rho(a,b).
	\end{align*}
	Let $C^{\prime}$ be the bicommutant of $\{L_a,R_b\}\subset\mathcal L(A)$ and let $\chi$ be a character on $C^{\prime}$. Then
	\begin{align*}
	\chi(H(L_a,R_b))&=\frac{1}{2\pi\,i}\int\limits_\Gamma \frac{f(\lambda)}
	{(\lambda-\chi(L_a))(\lambda-\chi(R_b))}d\lambda\\&=
	\left\{
	\begin{array}{cl}
	f^\prime\left(\chi(R_b)\right) & \text{if } \chi(R_b)=\chi(L_a)\\
	\frac{f(\chi(L_a))-f(\chi(R_b))}{\chi(L_a)-\chi(R_b)} & \text{if } \chi(R_b)\not=\chi(L_a).\\
	\end{array} \right.
	\end{align*}
	Since $\sigma(a)=\sigma_{\mathcal L(A)}(L_a)=\sigma_{C^{\prime}}(L_a)$ and
	$\sigma(b)=\sigma_{\mathcal L(A)}(R_b)=\sigma_{C^{\prime}}(R_b)$ \eqref{FCE} is evident, and all that remains is to show that the right side of \eqref{FCE} is finite:
	Define $G:U\times U\rightarrow\mathbb C$ by
	\[G(\omega,\lambda)=\left\{
	\begin{array}{cl}
	\frac{f(\lambda)-f(\omega)}{\lambda-\omega} & \text{if } \lambda\in U-\{\omega\}\\
	f^\prime(\omega) & \text{if } \lambda=\omega.\\
	\end{array} \right.\]
	It follows directly from \cite[Lemma 10.29]{rudin2} that $G$ is continuous on $U\times U$, and hence the right side of \eqref{FCE} is finite since 
	$\sigma(a)$ and $\sigma(b)$ are compact.
\end{proof}
As an easy application of Theorem~\ref{FCT} we give a very short proof of the main result in \cite{razpet}. First make the observation that if $p$ and $q$ are idempotents satisfying $\rho(p,q)=0$, then $p=q$; this follows directly from the fact that for $n$ odd we have $\left\| C_{p,q}^n\mathbf 1\right\|=\|p-q\|$.
\begin{theorem}[Razpet]\label{RAZ} Suppose $d_\rho(a,b)=0$, and suppose $h$ is an entire complex function with simple zeros. If the corresponding holomorphic calculus elements $h(a)$ and $h(b)$ satisfy $h(a)=h(b)=0$, then $a=b$.
\end{theorem}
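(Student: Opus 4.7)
The plan is to reduce the statement to the idempotent case recorded in the observation preceding Theorem~\ref{RAZ}. Since $d_\rho(a,b)=0$ implies $\sigma(a)=\sigma(b)$, and the common spectrum sits inside the zero set of $h$, which is closed and discrete because $h$ is entire with simple zeros, this spectrum is necessarily a finite set, say $\sigma(a)=\sigma(b)=\{\lambda_1,\dots,\lambda_k\}$. I would then choose pairwise disjoint open neighbourhoods $U_i$ of $\lambda_i$, small enough that each $U_i$ meets the zero set of $h$ only at $\lambda_i$, and set $U=\bigsqcup_i U_i$; on $U_i$ the simple-zero hypothesis permits the factorization $h(z)=(z-\lambda_i)h_i(z)$ with $h_i$ analytic and non-vanishing. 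Let $e_i$ denote the analytic function on $U$ that equals $1$ on $U_i$ and $0$ elsewhere, and define the spectral idempotents $p_i=e_i(a)$ and $q_i=e_i(b)$ via the holomorphic functional calculus.

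The key computational step is the ``semisimple'' identity $a=\sum_i\lambda_i p_i$ (and similarly $b=\sum_i\lambda_i q_i$). For this, I would define an analytic function on $U$ by $g=\sum_i e_i/h_i$, so that $gh$ coincides with $\sum_i(z-\lambda_i)e_i(z)$. Since $h(a)=0$, the holomorphic functional calculus yields
\begin{equation*}
0=g(a)h(a)=(gh)(a)=\sum_i(a-\lambda_i\mathbf 1)p_i=a\Bigl(\sum_i p_i\Bigr)-\sum_i\lambda_i p_i=a-\sum_i\lambda_i p_i,
\end{equation*}
where I have used that $\sum_i p_i=\mathbf 1$ because $\sum_i e_i\equiv 1$ on $U$.

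To finish, I would apply Theorem~\ref{FCT} to each $e_i$: since $d_\rho(a,b)=0$ and $e_i$ is holomorphic on an open set containing $\sigma(a)\cup\sigma(b)$, the theorem produces $d_\rho(p_i,q_i)=0$. As $p_i$ and $q_i$ are idempotents, the observation preceding Theorem~\ref{RAZ} forces $p_i=q_i$ for each $i$, and therefore $a=\sum_i\lambda_i p_i=\sum_i\lambda_i q_i=b$.

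The step I expect to be the main obstacle is the semisimple identity $a=\sum_i\lambda_i p_i$. Without the simple-zero assumption on $h$, the restriction of $a-\lambda_i\mathbf 1$ to the ``spectral subspace'' $p_iA$ could still carry a nonzero quasinilpotent part, and the argument above would collapse at exactly this point. The simple zeros of $h$ are precisely what is required to exclude such contributions and to promote the operator equation $h(a)=0$ into genuine diagonalization.
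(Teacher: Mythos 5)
Your proof is correct and follows essentially the same route as the paper: both reduce to the spectral idempotents $p_i=e_i(a)$, $q_i=e_i(b)$, apply Theorem~\ref{FCT} to get $d_\rho(p_i,q_i)=0$, and invoke the observation that quasinilpotent-equivalent idempotents coincide. The only difference is that you supply an explicit argument (via $g=\sum_i e_i/h_i$ and $(gh)(a)=0$) for the decomposition $a=\sum_i\lambda_i p_i$, which the paper simply cites as well known from Razpet's hypothesis on $h$; your verification of that step is sound.
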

\begin{proof}
	As stated in \cite{razpet} it is well-known that $\sigma(a)=\sigma(b)$ is finite, and, in particular, the hypothesis on $h$ implies that $a$ and $b$ can be expressed respectively as
	\[a=\sum_{j=1}^n\lambda_jp_j\ \ \mbox{ and }\ \ b=\sum_{j=1}^n\lambda_jq_j\] where $\{\lambda_1,\dots,\lambda_n\}=\sigma(a)=\sigma(b)$, and $p_j$ and $q_j$ are the Riesz idempotents corresponding to $\lambda_j$ for $a$ and $b$ respectively. Let $\{B(\lambda_j,r)\}_{j=1}^n$ be a covering of $\{\lambda_1,\dots,\lambda_n\}$ by mutually disjoint open disks, and for $k$ arbitrary but fixed let $f_k$ be the function that takes the value $1$ on $B(\lambda_k,r)$ and the value $0$ on $B(\lambda_j,r)$ if $j\not=k$. It follows from Theorem~\ref{FCT} that
	\[d_\rho(p_k,q_k)=d_\rho(f_k(a),f_k(b))=0\] whence $p_k=q_k$ by the comments preceding Theorem~\ref{RAZ}. Since $k$ was arbitrary $a=b$.
\end{proof}
In a more general (than Theorem~\ref{RAZ}) but similar vein, suppose now that $d_\rho(a,b)=0$ and assume $\sigma(a)$ is finite. So we can write $\sigma(a)=\{\lambda_1,\dots,\lambda_n\}=\sigma(b)$. The holomorphic functional calculus implies the representation
\[a=\sum_{j=1}^n\lambda_jp_j+r_a\ \ \mbox{ and }\ \ b=\sum_{j=1}^n\lambda_jq_j+r_b\]where, for each $j$, $p_j$ and $q_j$ are the Riesz idempotents corresponding to $\lambda_j$ for $a$ and $b$ respectively, and $r_a$  and $r_b$ are quasinilpotent elements belonging to the bicommutant of $a$ and $b$ respectively. The exact same argument as in Theorem~\ref{RAZ} shows that $p_j=q_j$ for each $j$. However, the observation preceding Theorem~\ref{RAZ} is no longer valid if idempotents are replaced by quasinilpotents; the best possible result is thus that $a-b=r_a-r_b$. This is Theorem 4.3 in  \cite{b+r}. Other results related to quasinilpotent equivalence of elements with finite spectra in \cite{b+r} can just as easily be derived through the use of Theorem~\ref{FCT}.  An application of Theorem~\ref{FCT}, together with the relationship between $\tau_f$ and $\rho(a,b)$, leads to a natural generalization of a famous result due to Gelfand:  Let $A$ be a complex Banach algebra with identity. Then $a\in A$ is the identity element of $A$ if and only if $\sigma(a)=\{1\}$ and $\{\|a^n\|:n\in\mathbb Z\}$ is bounded (i.e. $a$ is so-called doubly power bounded). Gelfand's result seems (deceptively) quite simple but it really has a very rich history, and, moreover, even the shortest of its many arguments relies on sophisticated ideas. For a particularly elegant proof, due to Allan and Ransford, look at \cite[Theorem 2.6.12.]{palmer}. To prove Theorem~\ref{GG} we shall need an old theorem ascribed to P\'{o}lya:
\begin{theorem}[\cite{levinson}]\label{POL} Let $f:\mathbb C\rightarrow A$ be an entire function from the field $\mathbb C$ into a Banach algebra $A$. If $f$ is (norm) bounded over $\mathbb Z$ and if $$\limsup\limits_{r\rightarrow\infty}\frac{\log M_f(r)}{r}\leq0,$$ then $f$ is constant.
\end{theorem}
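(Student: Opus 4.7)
The plan is to reduce the vector-valued statement to the classical scalar P\'{o}lya theorem by a Hahn-Banach argument. For each continuous linear functional $\phi$ on $A$, consider the scalar entire function $g_\phi := \phi \circ f$. It inherits both hypotheses of the theorem: $|g_\phi(n)| \leq \|\phi\|\sup_{m \in \mathbb Z}\|f(m)\|$ for $n \in \mathbb Z$, so $g_\phi$ is bounded on the integers; and $M_{g_\phi}(r) \leq \|\phi\|M_f(r)$, so $\limsup_{r \to \infty}\log M_{g_\phi}(r)/r \leq 0$ as well.

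Granting the scalar P\'{o}lya theorem---that an entire scalar function of minimal exponential type bounded on $\mathbb Z$ must be constant---we obtain $\phi(f(\lambda)) = \phi(f(0))$ for every $\phi$ in the dual of $A$ and every $\lambda \in \mathbb C$. The Hahn-Banach theorem then yields $f(\lambda) = f(0)$ for all $\lambda$, proving the theorem.

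The main obstacle lies in the scalar P\'{o}lya theorem itself, which we invoke directly from \cite{levinson}. If one wished to reprove it, a standard argument uses the Newton forward-difference expansion $g(z) = \sum_{k=0}^\infty \binom{z}{k}\Delta^k g(0)$ (valid for entire scalar functions of exponential type less than $\log 2$) together with the contour-integral representation $\Delta^k g(0) = \frac{k!}{2\pi i}\oint_{|z|=R}\frac{g(z)\,dz}{z(z-1)\cdots(z-k)}$; applied on circles of radius $R = \alpha k$ with $\alpha$ large, the minimal-type hypothesis produces $\limsup_k|\Delta^k g(0)|^{1/k} = 0$, and combined with the $\mathbb Z$-boundedness this forces the higher-order differences to vanish. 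Alternatively one applies a Phragm\'{e}n-Lindel\"{o}f argument to the meromorphic quotient $(g(z) - g(0))/\sin(\pi z)$ in each half-plane, using the minimal-type growth to control the imaginary directions. The sharpness of the hypothesis $\log M_g(r)/r \to 0$ is evident from $\sin(\pi z)$, which vanishes on all of $\mathbb Z$ yet has exponential type exactly $\pi$; this is precisely what makes the scalar result a delicate one rather than a routine application of Liouville.
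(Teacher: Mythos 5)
Your proposal is correct and matches the paper exactly: the paper likewise invokes the scalar result of \cite{levinson} and remarks only that it ``extends easily to $A$ via a standard argument with functionals belonging to the dual of $A$,'' which is precisely your Hahn--Banach reduction. The extra sketch of the scalar case is a bonus the paper does not attempt.
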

In the original P\'{o}lya's Theorem $A=\mathbb C$, but it extends easily to $A$ via a standard argument with functionals belonging to the dual of $A$.
\begin{theorem}\label{GG}
	Let $A$ be a Banach algebra, and let $a,b\in A$. If $0\notin\pco(\sigma(a)\cup\sigma(b))$, then $a=b$ if and only if $\rho(a,b)=0$ and $\sup_{n\in\mathbb Z}\|a^nb^{-n}\|<+\infty.$ More generally, two elements, $a$ and $b$, in a Banach algebra coincide if and only if $\rho(a,b)=0$ , and there exists  $\alpha\notin\pco(\sigma(a)\cup\sigma(b))$ such that
	$\sup_{n\in\mathbb Z}\|(\alpha\mathbf 1+a)^n(\alpha\mathbf 1+b)^{-n}\|<+\infty.$
\end{theorem}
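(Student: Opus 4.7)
The plan is to apply P\'{o}lya's Theorem~\ref{POL} to an entire function built from (branches of) $\log a$ and $\log b$: the double power-boundedness will furnish the integer-boundedness required by P\'{o}lya, and the growth dichotomy from the introduction (via Theorem~\ref{FCT} applied to the logarithm) will furnish the sub-exponential growth.

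Since $\rho(a+c,b+c)=\rho(a,b)$ whenever $c$ commutes with $a$ and $b$, the translation $(a,b)\mapsto(\alpha\mathbf 1+a,\alpha\mathbf 1+b)$ reduces the second statement to the first, so I assume $0\notin\pco(\sigma(a)\cup\sigma(b))$, $\rho(a,b)=0$, and $M:=\sup_{n\in\mathbb Z}\|a^nb^{-n}\|<\infty$. The pco hypothesis places $0$ in the unbounded component of $\mathbb{C}\setminus(\sigma(a)\cup\sigma(b))$, so I can find a simply connected open $U\supset\pco(\sigma(a)\cup\sigma(b))$ with $0\notin U$ on which a holomorphic branch $\ell$ of $\log$ is defined. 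Setting $\widetilde a:=\ell(a)$ and $\widetilde b:=\ell(b)$ via the holomorphic functional calculus, the identity $e^{\ell(z)}=z$ on $U$ lifts to $e^{n\widetilde a}=a^n$ and $e^{n\widetilde b}=b^n$ for every $n\in\mathbb Z$, and Theorem~\ref{FCT} applied to $\ell$ gives $\rho(\widetilde a,\widetilde b)\le C\cdot\rho(a,b)=0$, where $C$ is the finite constant arising from the right side of \eqref{FCE}.

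Now form the entire map $F:\lambda\mapsto e^{\lambda\widetilde a}e^{-\lambda\widetilde b}$. For every $n\in\mathbb Z$ one has $F(n)=a^nb^{-n}$, so $\{\|F(n)\|:n\in\mathbb Z\}$ is bounded by $M$. The growth dichotomy recalled in the introduction, applied to $F$ and the pair $(\widetilde a,\widetilde b)$, says that either $\omega_F<1$ or else $\omega_F=1$ with $\tau_F=\rho(\widetilde a,\widetilde b)=0$; in either case a direct estimate yields $\limsup_{r\to\infty}\log M_F(r)/r\le 0$. P\'{o}lya's Theorem~\ref{POL} therefore forces $F$ to be constant, and since $F(0)=\mathbf 1$, evaluation at $\lambda=1$ gives $ab^{-1}=\mathbf 1$, i.e.\ $a=b$.

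The main obstacle is modest and is really one of bookkeeping: the pco hypothesis is precisely what is needed to define $\widetilde a$ and $\widetilde b$ through a single branch of $\log$; Theorem~\ref{FCT} is what upgrades $\rho(a,b)=0$ to $\rho(\widetilde a,\widetilde b)=0$; and the identification $\tau_F=\rho(\widetilde a,\widetilde b)$ is what converts the latter into the sub-exponential growth condition required by P\'{o}lya. Once these three pieces are aligned, the conclusion is immediate.
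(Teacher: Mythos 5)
Your argument is correct and follows essentially the same route as the paper: reduce to the case $0\notin\pco(\sigma(a)\cup\sigma(b))$ by translation, pass to a holomorphic branch of the logarithm on a neighbourhood of the spectra, transfer $\rho(a,b)=0$ to the logarithms via Theorem~\ref{FCT}, and apply P\'{o}lya's Theorem~\ref{POL} to $\lambda\mapsto e^{\lambda\log a}e^{-\lambda\log b}$ using the order/type dichotomy. The only (harmless) difference is the final step: you evaluate the constant function at $\lambda=1$ to get $ab^{-1}=\mathbf 1$, whereas the paper concludes $\log a=\log b$.
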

\begin{proof} The forward implication is trivial. To prove the reverse implication, suppose there exists $M>0$ such that $\|a^nb^{-n}\|\leq M$ for all $n\in\mathbb Z$.   Since $\sigma(a)\cup\sigma(b)$ does not separate $0$ from infinity, there is an open set $U$ containing $\sigma(a)\cup\sigma(b)$ such that $\lambda=e^{\log\lambda}$ for some analytic branch, $\log\lambda$, of the logarithm on $U$. Since $\rho(a,b)=0$ it follows, from Theorem~\ref{FCT}, that
	$\rho(\log a,\log b)=0$. So we consider the function from $\mathbb C$ to $A$ given by
	\[h(\lambda)=e^{\lambda\log a}e^{-\lambda\log b}=\sum_{n=0}^\infty\frac{{\lambda}^nC_{\log a,\log b}^n\mathbf 1}{n!}.\]
	As noted in the preceding section, the function $h$ is of order at most one. We consider two cases:\\
	(i) $\omega_h<1$: This implies there exists $0<K<1$ and $R>0$ such that $M_h(r)<e^{r^K}$ holds for all $r>R$ whence it follows that
	$$\limsup\limits_{r\rightarrow\infty}\frac{\log M_h(r)}{r}\leq\limsup\limits_{r\rightarrow\infty}\frac{1}{r^{1-K}}=0.$$\\
	(ii) $\omega_h=1$: From the discussion in the preceding section we now have that $\tau_h=\rho(\log a,\log b)=0$. If $\epsilon>0$ is given, then there exists $R(\epsilon)>0$ such that
	$M_h(r)<e^{\epsilon r}$ holds for all $r>R(\epsilon)$ whence it follows that
	$$\limsup\limits_{r\rightarrow\infty}\frac{\log M_h(r)}{r}\leq\limsup\limits_{r\rightarrow\infty}\frac{\log e^{\epsilon r}}{r}=\epsilon.$$ Since $\epsilon>0$ was arbitrary we also have in this case that
	$$\limsup\limits_{r\rightarrow\infty}\frac{\log M_h(r)}{r}\leq0.$$
	If we observe further that \[\|h(n)\|=\|e^{n\log a}e^{-n\log b}\|=\|a^{n}b^{-n}\|\leq M\]
	holds for each $n\in\mathbb Z$, then it follows from  P\'{o}lya's Theorem that $h$ must be constant. From this one infers that $\log a=\log b$, and hence that $a=b$.
	For the general case we simplify have to observe that $\rho(a,b)=0\Leftrightarrow\rho(\alpha\mathbf 1+a,\alpha\mathbf 1+b)=0$ for all $\alpha\in\mathbb C$.
\end{proof}

\begin{corollary}[Gelfand]\label{GEL}
	Let $A$ be a Banach algebra and $a\in A$. Then $a=\mathbf 1$ if and only if $\sigma(a)=\{1\}$ and $\sup_{n\in\mathbb Z}\|a^n\|<+\infty.$
\end{corollary}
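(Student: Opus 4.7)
The plan is to deduce Corollary~\ref{GEL} as an immediate specialization of Theorem~\ref{GG}, taking $b=\mathbf 1$. The forward direction is trivial ($\sigma(\mathbf 1)=\{1\}$ and $\|\mathbf 1^n\|=1$), so I focus on the reverse implication.

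First I would verify the spectral hypothesis of Theorem~\ref{GG}: with $b=\mathbf 1$ one has $\sigma(a)\cup\sigma(\mathbf 1)=\{1\}$, hence $\pco(\sigma(a)\cup\sigma(b))=\{1\}$, which certainly does not contain $0$. The power-boundedness condition $\sup_{n\in\mathbb Z}\|a^n\mathbf 1^{-n}\|=\sup_{n\in\mathbb Z}\|a^n\|<+\infty$ is exactly the standing hypothesis.

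The only step that requires a moment's thought is checking $\rho(a,\mathbf 1)=0$. Since $\mathbf 1$ commutes with $a$, the introductory remarks on $\rho$ reduce the commutator formula to the commutative expression $\rho(a,\mathbf 1)=\lim_n\|(a-\mathbf 1)^n\|^{1/n}=r_\sigma(a-\mathbf 1)$. By the spectral mapping theorem $\sigma(a-\mathbf 1)=\sigma(a)-1=\{0\}$, so $r_\sigma(a-\mathbf 1)=0$, as required.

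With all three hypotheses of Theorem~\ref{GG} verified, the conclusion $a=\mathbf 1$ is immediate. There is no real obstacle here; the corollary is essentially a one-line specialization, which is presumably the whole point of stating Theorem~\ref{GG} in the form it takes. The genuine analytic work (the Phragm\'en--Lindel\"of/P\'olya device and the holomorphic functional calculus control via Theorem~\ref{FCT}) has already been carried out in the proof of Theorem~\ref{GG}; applying it with $b=\mathbf 1$ just recovers Gelfand's classical statement.
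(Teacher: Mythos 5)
Your proposal is correct and follows exactly the paper's own route: specialize Theorem~\ref{GG} to $b=\mathbf 1$, note $0\notin\pco(\sigma(a))=\{1\}$, and compute $\rho(a,\mathbf 1)=r_\sigma(a-\mathbf 1)=0$ via the commutative reduction and the spectral mapping theorem. No differences worth noting.
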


\begin{proof}
	The forward implication is trivial. Conversely, if $\sigma(a)=\{1\}$, then $0\notin\pco(\sigma(a))$ and $\rho(a,\mathbf 1)=r_\sigma(a-\mathbf 1)=0$. If, in addition,
	$\{\|a^n\|:n\in\mathbb Z\}$ is bounded then Theorem~\ref{GG} says $a=\mathbf 1$.
\end{proof}

\section{ Quasinilpotent equivalence in $C^*$-algebras}

Two immediate applications of Theorem~\ref{GG} to $C^*$-algebras are:

\begin{corollary}\label{SAE}
	Let $A$ be a $C^*$-algebra, and let $a,b\in A$ be self-adjoint elements. Then $a=b$ if and only if $\rho(a,b)=0$.
\end{corollary}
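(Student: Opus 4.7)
The forward implication is immediate, so the content lies in the converse. The natural idea is to reduce to Theorem~\ref{GG}, but the hypothesis $\rho(a,b)=0$ together with self-adjointness does not on its own give any bound on $\|a^nb^{-n}\|$; the powers of $a$ and $b^{-1}$ can easily grow in norm. Self-adjointness in a $C^*$-algebra does, however, strongly suggest passing instead to the unitary exponentials $e^{i\epsilon a}$ and $e^{i\epsilon b}$, whose integer powers always have norm one.

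Accordingly, I would fix $\epsilon>0$ so small that $\epsilon\max(\|a\|,\|b\|)<\pi$, and set $a'=e^{i\epsilon a}$ and $b'=e^{i\epsilon b}$. Theorem~\ref{FCT}, applied to the entire function $z\mapsto e^{i\epsilon z}$, converts $\rho(a,b)=0$ into $\rho(a',b')=0$. Because $a$ and $b$ are self-adjoint, the spectral mapping theorem places $\sigma(a')\cup\sigma(b')$ inside a compact proper sub-arc of the unit circle avoiding $-1$; since the complement of such an arc in $\mathbb{C}$ is connected, $\pco(\sigma(a')\cup\sigma(b'))$ is the arc itself, and in particular does not contain $0$. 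Moreover, for every $n\in\mathbb{Z}$ the element $(a')^n(b')^{-n}=e^{in\epsilon a}e^{-in\epsilon b}$ is a product of unitaries in the $C^*$-algebra and hence has norm at most $1$.

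All the hypotheses of Theorem~\ref{GG} are now in place, and it delivers $e^{i\epsilon a}=e^{i\epsilon b}$. Applying the principal branch of the logarithm via the holomorphic functional calculus---legitimate because $\sigma(a')\cup\sigma(b')$ is disjoint from the branch cut $(-\infty,0]$---inverts the exponential on the relevant spectra and yields $i\epsilon a=i\epsilon b$, hence $a=b$. The step I expect to be the main subtlety is keeping control of $\pco(\sigma(a')\cup\sigma(b'))$: the smallness requirement on $\epsilon$ is precisely what ensures that the compact arc on the unit circle does not wrap around, so that both the polynomial convex hull remains disjoint from $0$ and the principal logarithm recovers $a$ and $b$ unambiguously.
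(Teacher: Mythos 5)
Your argument is correct and follows essentially the same route as the paper: scale so the spectra land in a proper arc of the unit circle, pass to the unitaries $e^{i\epsilon a}$, $e^{i\epsilon b}$ via Theorem~\ref{FCT}, and invoke Theorem~\ref{GG} using $\|e^{in\epsilon a}e^{-in\epsilon b}\|\leq 1$. The only (immaterial) difference is the final step: the paper obtains $e^{ita}=e^{itb}$ for all $t$ in an interval and deduces $a=b$ from that, whereas you invert a single exponential with the principal logarithm via the functional calculus, which is equally valid given your smallness condition on $\epsilon$.
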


\begin{proof} Assume $\rho(a,b)=0$. Let $r>0$ be sufficiently small so that $\sigma(ta)\cup\sigma(tb)\subseteq[-\frac{\pi}{2},\frac{\pi}{2}]$ for all real $t\in[-r,r]$. Then observe that
	\[\sigma(e^{ita})\cup\sigma(e^{itb})\subseteq\{\lambda\in\mathbb C:|\lambda|=1,\re\lambda\geq0\},\ \ \ t\in[-r,r].\] Furthermore, by Theorem~\ref{FCT}, we have that
	$\rho(e^{ita},e^{itb})=0$. Since $0\notin\pco(\sigma(e^{ita})\cup\sigma(e^{itb}))$, and $$\|e^{nita}e^{-nitb}\|\leq\|e^{nita}\|\,\|e^{-nitb}\|=1$$ holds for all $n\in\mathbb Z$, Theorem~\ref{GG} gives that $e^{ita}=e^{itb}$. But this being valid for all $t\in[-r,r]$ we obtain $a=b$.
	
\end{proof}

\begin{corollary}\label{UE}
	Let $A$ be a $C^*$-algebra and let $a,b\in A$ be unitary elements. Then $a=b$ if and only if $\rho(a,b)=0$.
\end{corollary}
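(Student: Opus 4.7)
The forward direction is immediate. For the converse, let $a$ and $b$ be unitary in a $C^*$-algebra $A$ with $\rho(a,b)=0$. The plan is to extract the Hermitian and skew-Hermitian parts of $a$ and $b$ by the holomorphic functional calculus and then appeal to the self-adjoint case, Corollary~\ref{SAE}, applied to each piece separately.

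Since $a$ and $b$ are unitary we have $\sigma(a)\cup\sigma(b)\subseteq\mathbb{T}$, and in particular $0\notin\sigma(a)\cup\sigma(b)$. Consequently the functions $f(z)=z+z^{-1}$ and $g(z)=i(z-z^{-1})$ are holomorphic on $\mathbb{C}\setminus\{0\}$, an open neighborhood of $\sigma(a)\cup\sigma(b)$. Theorem~\ref{FCT} therefore gives $\rho(f(a),f(b))=0$ and $\rho(g(a),g(b))=0$. Because $a^{-1}=a^*$ and $b^{-1}=b^*$ in a $C^*$-algebra, the holomorphic calculus yields $f(a)=a+a^*$ and $g(a)=i(a-a^*)$, with analogous formulas for $b$. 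Each of these elements is self-adjoint; for example, $(i(a-a^*))^*=-i(a^*-a)=i(a-a^*)$.

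Applying Corollary~\ref{SAE} to the self-adjoint pairs $(a+a^*,\,b+b^*)$ and $(i(a-a^*),\,i(b-b^*))$ delivers $a+a^*=b+b^*$ and $a-a^*=b-b^*$, respectively. Summing these identities gives $2a=2b$, i.e.\ $a=b$.

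I do not foresee any real obstacle: the key observation is simply that for unitaries in a $C^*$-algebra the holomorphic map $z\mapsto z^{-1}$ coincides with the involution, so that $f$ and $g$ already furnish the Cartesian decomposition of $a$ and $b$; after that the argument is a mechanical combination of Theorem~\ref{FCT} with Corollary~\ref{SAE}.
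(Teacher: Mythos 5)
Your proof is correct and follows essentially the same route as the paper: the Cartesian decomposition of $a$ and $b$ into self-adjoint parts, obtained by applying Theorem~\ref{FCT} to $z\mapsto z\pm z^{-1}$ on a neighbourhood of the unit circle (where $z^{-1}$ coincides with the involution for unitaries), followed by Corollary~\ref{SAE} on each part. The only difference is that the paper first disposes of the case where $\sigma(a)\cup\sigma(b)$ is a proper subset of the circle separately via Theorem~\ref{GG}, whereas you correctly note that the functional-calculus argument requires no such case distinction and works uniformly.
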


\begin{proof} Assume $\rho(a,b)=0$. If $\sigma(a)\cup\sigma(b)$ is properly contained in the unit circle, $S$, then $0\notin\pco(\sigma(a)\cup\sigma(b))$, and for each $n\in\mathbb Z$
	\[\|a^nb^{-n}\|\leq\|a^n\|\|b^{-n}\|=r_\sigma(a^n)\,r_\sigma(b^{-n})=1.\] So, Theorem~\ref{GG} gives $a=b$. Assume that
	$\sigma(a)\cup\sigma(b)=S$. Write
	\begin{equation}\label{ab}
	a=\frac{1}{2}\left(a+a^{*}\right)+\frac{1}{2i}(a-a^{*})\,i,\mbox{ and }
	b=\frac{1}{2}\left(b+b^{*}\right)+\frac{1}{2i}(b-b^{*})\,i.
	\end{equation}
	It follows from Theorem~\ref{FCT}, with
	$$f(\lambda)=\lambda+\frac{1}{\lambda},\ \ \lambda\in U:=\{\lambda\in\mathbb C:\frac{1}{2}<|\lambda|<2\},$$
	that
	$$\rho(a+a^*, b+b^*)=\rho(a+a^{-1},b+b^{-1})=\rho(f(a),f(b))=0,$$ and hence, from Theorem~\ref{SAE}, that
	$a+a^*=b+b^*$. If, on the other hand, we take $f(\lambda)=\lambda-\frac{1}{\lambda}$ on $U$ then similarly it follows
	that $\rho(a-a^{-1},b-b^{-1})=0$. This implies that  $\rho({-i}(a-a^*),{-i}(b-b^*)=0$, and again using Theorem~\ref{SAE} that $a-a^*=b-b^*$.
	Equality of $a$ and $b$ is now clear from \eqref{ab}.
\end{proof}

We show that Corollaries~\ref{SAE} and ~\ref{UE} can in fact be extended to normal elements. The proof now relies on a Phragm\'{e}n-Lindel\"{o}f Theorem for subharmonic functions:

\begin{theorem}[\cite{rans} Corollary 2.3.8]\label{PHT}
	Let $u$ be a subharmonic function on the half-plane $H:=\{\lambda\in\mathbb C:\re\lambda>0\}$, such that for some constants $A,B<\infty$
	\begin{equation}\label{AB}
	u(\lambda)\leq A+B|\lambda|\quad\mbox{ for all }\lambda\in H.
	\end{equation}
	If
	\begin{equation}\label{BDY}
	\limsup\limits_{\lambda\rightarrow\zeta}u(\lambda)\leq0\quad\mbox{ for all } \zeta\in\partial H\backslash\{\infty\},
	\end{equation}
	and if
	\begin{equation}\label{IRA}
	\limsup\limits_{{t\rightarrow\infty}\atop{t\in\mathbb R^+}}\frac{u(t)}{t}=L,
	\end{equation}
	then
	\begin{equation}
	u(\lambda)\leq L\re\lambda\quad\mbox{ for all }\lambda\in H.
	\end{equation}
\end{theorem}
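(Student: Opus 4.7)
The plan is to run a Phragmén-Lindelöf argument adapted to exploit the radial growth rate $L$ along the positive real axis, in three stages.

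\emph{Normalization.} Set $v(\lambda) = u(\lambda) - L\,\re\lambda$. Because $\re\lambda$ is harmonic, $v$ is subharmonic on $H$, retains the linear-growth bound $v(\lambda) \leq A + (B+|L|)|\lambda|$, still satisfies $\limsup_{\lambda \to \zeta} v(\lambda) \leq 0$ on $\partial H \setminus \{\infty\}$, and now obeys $\limsup_{t\to\infty} v(t)/t \leq 0$. The conclusion reduces to showing $v \leq 0$ on $H$.

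\emph{Creating strict decay along $\mathbb{R}_+$.} For $\epsilon > 0$, set $v_\epsilon(\lambda) = v(\lambda) - \epsilon\,\re\lambda$. Since $\re\lambda = 0$ on $\partial H$, the imaginary-axis condition is preserved, and $\limsup_{t\to\infty} v_\epsilon(t)/t \leq -\epsilon < 0$, so $v_\epsilon \to -\infty$ linearly along $\mathbb{R}_+$. It suffices to show $v_\epsilon \leq 0$ on $H$ for each such $\epsilon$, then let $\epsilon \to 0^+$.

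\emph{Main Phragmén-Lindelöf step.} To control $v_\epsilon$ away from the positive real axis I would introduce the sublinear harmonic barrier $\delta\,\re(\lambda^{1/2})$ with $\delta > 0$, using the principal branch. On $H$ one has $\arg\lambda \in (-\pi/2, \pi/2)$, hence $\re(\lambda^{1/2}) = |\lambda|^{1/2}\cos(\arg\lambda/2) \geq |\lambda|^{1/2}/\sqrt{2} > 0$, with value $\sqrt{|y|/2}$ on $\partial H$. The function $V_{\epsilon,\delta} = v_\epsilon - \delta\,\re(\lambda^{1/2})$ is then subharmonic on $H$, satisfies $\limsup V_{\epsilon,\delta} \leq -\delta\sqrt{|y|/2} \leq 0$ on $\partial H$, and still tends to $-\infty$ along $\mathbb{R}_+$. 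I would then apply the maximum principle on the truncated half-disk $\Omega_R = H \cap \{|\lambda|<R\}$: the imaginary-axis arc contributes a nonpositive boundary value, and the strict interior decay along $\mathbb{R}_+$ combined with a harmonic-measure/Poisson-integral estimate on $\Omega_R$ should suffice to force $V_{\epsilon,\delta} \leq 0$ on $\Omega_R$ for all sufficiently large $R$. Sending $R \to \infty$, then $\delta \to 0^+$, then $\epsilon \to 0^+$, would give $v \leq 0$.

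The \emph{hard part} is the closing estimate in the main step, because the sublinear $\delta\,\re(\lambda^{1/2})$ barrier is not by itself strong enough to absorb the linear growth of $v_\epsilon$ on the semicircular arc $\Gamma_R = \partial\Omega_R \cap H$. The key analytic difficulty is to transfer the strict interior decay $v_\epsilon(t) \leq -\tfrac{\epsilon}{2} t$ along $\mathbb{R}_+$ into a global bound on $\Omega_R$; this likely requires invoking the Poisson representation for subharmonic functions of linear order on a half-plane, which is the technical backbone of Ransford's treatment and precisely the place where the radial assumption \eqref{IRA} is doing real work.
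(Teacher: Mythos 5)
The paper gives no proof of this statement at all---it is quoted verbatim from Ransford's book as Corollary 2.3.8---so your attempt must be judged on its own terms, and as written it has a genuine gap exactly where you flag ``the hard part.'' The normalization $v=u-L\re\lambda$ and the $\epsilon$-perturbation are fine, but the main step does not close. The barrier $\delta\re(\lambda^{1/2})$ grows like $|\lambda|^{1/2}$ and cannot absorb the linear growth $A+B'|\lambda|$ of $v_\epsilon$ on the arc $\Gamma_R$; no barrier $\re(\lambda^{\beta})$ with $\beta<1$ can, and for $\beta\geq 1$ the barrier fails to be nonnegative on the closed half-plane, which is precisely why \eqref{IRA} is needed at all. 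Worse, the maximum principle on the half-disk $\Omega_R$ can only consume data on $\partial\Omega_R$: the decay of $v_\epsilon$ along $\mathbb R_+$ is an \emph{interior} fact for $\Omega_R$ and cannot be fed into that maximum principle. A harmonic-measure estimate does not rescue this: the harmonic measure of $\Gamma_R$ at a fixed point of $\Omega_R$ is of order $1/R$, which against the bound $A+B'R$ on $\Gamma_R$ yields only $O(1)$---boundedness, not nonpositivity---so the appeal to ``a Poisson-integral estimate'' leaves the essential step unproved.

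The missing idea is to cut $H$ along the positive real axis so that the radial hypothesis becomes genuine boundary data. By \eqref{AB} and \eqref{IRA}, $v_\epsilon$ is bounded above by some constant $C\geq 0$ on $(0,\infty)$ (it is $\leq A+ (B+|L|+\epsilon)T$ on $(0,T]$ and tends to $-\infty$ as $t\to\infty$). Each quarter-plane $Q^{\pm}=\{0<\pm\arg\lambda<\pi/2\}$ is a sector of half-angle $\pi/4$, on which linear growth is strictly subcritical: the standard sector Phragm\'en--Lindel\"of theorem with barrier $\delta\re\bigl((e^{\mp i\pi/4}\lambda)^{\beta}\bigr)$, $1<\beta<2$, applies to $v_\epsilon-C$, whose boundary $\limsup$ is $\leq 0$ on the imaginary half-axis by \eqref{BDY} and on $\mathbb R_+$ by upper semicontinuity of $v_\epsilon$ there. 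This gives $v_\epsilon\leq C$ on $Q^{\pm}$, hence on all of $H$. Now $v_\epsilon$ is subharmonic and bounded above on $H$ with $\limsup\leq 0$ at every finite boundary point, and the single exceptional point $\infty$ is polar, so the extended maximum principle forces $v_\epsilon\leq 0$ on $H$; letting $\epsilon\to 0^+$ yields $u\leq L\re\lambda$. Your outline assembles the right raw ingredients, but without this quarter-plane decomposition the argument does not go through.
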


\begin{theorem}\label{NORMAL}
	Let $A$ be a $C^*$-algebra and let $a,b\in A$ be normal elements. Then $a=b$ if and only if $\rho(a,b)=0$.
\end{theorem}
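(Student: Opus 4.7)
The forward implication $a = b \Rightarrow \rho(a,b) = 0$ is trivial. For the converse, suppose $\rho(a,b) = 0$ with $a$ and $b$ normal. My plan is to reduce to the self-adjoint case already handled by Corollary~\ref{SAE}. Write $a = a_1 + i a_2$ and $b = b_1 + i b_2$ as Cartesian decompositions into commuting self-adjoint parts; it suffices to establish $\rho(a_j, b_j) = 0$ for $j = 1,2$, since then Corollary~\ref{SAE} yields $a_j = b_j$ and hence $a = b$. I focus on the real parts $(a_1, b_1)$, the imaginary parts being completely analogous.

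The identity that drives the reduction, available precisely because $a$ commutes with $a^*$ and $b$ with $b^*$, is
\begin{equation*}
e^{2\lambda a_1}\, e^{-2\lambda b_1} \;=\; e^{\lambda a^*}\bigl(e^{\lambda a}\, e^{-\lambda b}\bigr) e^{-\lambda b^*} \;=\; e^{\lambda a^*}\, F(\lambda)\, e^{-\lambda b^*},
\end{equation*}
where $F(\lambda) = e^{\lambda a} e^{-\lambda b}$ has exponential type $0$ by the hypothesis $\rho(a,b) = 0$. Thus the entire function $G(\lambda) := e^{\lambda a_1} e^{-\lambda b_1}$, whose exponential type is exactly $\rho(a_1, b_1)$, is tied at argument $2\lambda$ to $F$ by a factor involving only the adjoints $a^*$ and $b^*$.

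I would then apply Theorem~\ref{PHT} to the subharmonic function $u(\lambda) := \log \|G(\lambda)\|$ on the right half-plane $H$. Linear growth is automatic, and the boundary condition $u(is) \leq 0$ for $s \in \mathbb{R}$ follows from the unitarity of $e^{is a_1}$ and $e^{-is b_1}$ (since $a_1, b_1$ are self-adjoint), which gives $\|G(is)\| \leq 1$. Theorem~\ref{PHT} then delivers $u(\lambda) \leq L\re\lambda$ on $H$, where $L := \limsup_{t \to \infty} u(t)/t$; the symmetric argument on the left half-plane delivers $u(\lambda) \leq L'(-\re\lambda)$ with $L' := \limsup_{t \to \infty} u(-t)/t$. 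Together these two estimates bound the exponential type of $G$ by $\max(L, L')$.

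The crux of the argument, and the step I expect to be the main technical obstacle, is to establish $L = L' = 0$. The factorization above reduces this to controlling $\|e^{ta^*}\, F(t)\, e^{-tb^*}\|$ on the positive real axis: while the individual exponentials $\|e^{ta^*}\| = e^{t\max\re\sigma(a)}$ and $\|e^{-tb^*}\| = e^{-t\min\re\sigma(b)}$ have genuine exponential growth in $t$, the $C^*$-identity $\|x\|^2 = \|x^*x\|$, together with the normality of $a$ and $b$ (so $a^*, a$ commute and $b^*, b$ commute), should expose the cancellations between these factors and their adjoints. Combined with the sub-exponential growth $\|F(t)\| = o(e^{\epsilon t})$ inherited from $\rho(a,b) = 0$, this yields the required bound $L \leq 0$, and symmetrically $L' \leq 0$. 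Once $L = L' = 0$ is secured, $u(\lambda) \leq 0$ on all of $\mathbb{C}$, so the Liouville theorem applied to the bounded entire $A$-valued function $G$ forces $G \equiv G(0) = \mathbf{1}$; equivalently, $\rho(a_1, b_1) = 0$, and Corollary~\ref{SAE} gives $a_1 = b_1$. The symmetric argument furnishes $a_2 = b_2$, and hence $a = b$.
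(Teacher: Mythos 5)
Your overall skeleton (reduce to the commuting self-adjoint parts, control growth on the imaginary axis by unitarity, apply Theorem~\ref{PHT}, finish with Liouville and Corollary~\ref{SAE}) is the same as the paper's, but the step you yourself identify as the crux --- the radial bound $L\leq 0$ for $u(t)=\log\|e^{ta_1}e^{-tb_1}\|$ --- is not proved, and the mechanism you offer for it does not work as stated. Your identity $e^{2\lambda a_1}e^{-2\lambda b_1}=e^{\lambda a^*}F(\lambda)e^{-\lambda b^*}$ is correct, but the operator norm admits no cyclic rearrangement: the naive estimate gives $\|e^{ta^*}F(t)e^{-tb^*}\|\leq e^{t\left(\max\re\sigma(a)-\min\re\sigma(b)\right)}\|F(t)\|$, which is genuinely exponential, and the $C^*$-identity by itself does not cancel the conjugating factors (it only converts the problem into one about $r_\sigma$ at a doubled argument, e.g.\ $\|G(t)\|^2=r_\sigma\bigl(e^{-tb_1}e^{2ta_1}e^{-tb_1}\bigr)$, which still has to be estimated). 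The cancellation you are hoping for is exactly what the paper manufactures by working with $\log r_\sigma(f(\lambda))$ instead of $\log\|\cdot\|$: the spectral radius is subharmonic in $\lambda$ by Vesentini's theorem, and Jacobson's lemma ($\sigma(xy)\setminus\{0\}=\sigma(yx)\setminus\{0\}$) allows the exponential factors to be regrouped as $r_\sigma\bigl(e^{\lambda i a}e^{-\lambda i(b-b^*)}e^{-\lambda i a^*}\bigr)=r_\sigma\bigl(\left[e^{\lambda ia}e^{-\lambda ib}\right]\left[e^{\lambda ib^*}e^{-\lambda ia^*}\right]\bigr)$, where the first bracket has exponential type $\rho(a,b)=0$ and the second has type $\rho(b^*,a^*)$.

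This exposes a second omission: you never use (or derive) the fact that $\rho(a,b)=0$ forces $\rho(b^*,a^*)=0$, which the paper gets from the involution identity $\left[C_{a,b}^n\mathbf 1\right]^*=\pm C_{b^*,a^*}^n\mathbf 1$. Without this, there is no control on the adjoint factor even after the Jacobson rearrangement, so the radial bound cannot be closed. Finally, note that once you pass to the spectral radius (which you must to fix the gap), your ending also has to change: boundedness of $r_\sigma(G(\lambda))$ does not make $G$ a bounded entire function, so the classical Liouville argument you invoke is unavailable; the paper instead applies the subharmonic Liouville theorem to $r_\sigma\bigl((g(\lambda)-\mathbf 1)/\lambda\bigr)$ and then uses self-adjointness of $i(a-a^*)-i(b-b^*)$ to upgrade $r_\sigma=0$ to equality, after which only one application of Corollary~\ref{SAE} is needed (your plan would require running the hard estimate twice, once for each Cartesian component).
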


\begin{proof}
	We first notice, since
	\begin{equation}\label{invol}
	\left[C_{a,b}^n\mathbf 1\right]^*=\left\{\begin{array}{cc}C_{b^*,a^*}^n\mathbf 1 & n\mbox{ even }\\
	-C_{b^*,a^*}^n\mathbf 1 & n\mbox{ odd }\end{array}\right.,
	\end{equation}
	that $\rho(a,b)=0\Rightarrow\rho(b^*,a^*)=0.$
	Define an entire function, $f$, and an entire (auxiliary) function, $g$, from $\mathbb C$ into $A$ by respectively
	$$f(\lambda)=e^{\lambda i a }e^{-\lambda i (b-b^*) }e^{-\lambda i a^* }\mbox{ and }
	g(\lambda)=e^{\lambda i (a-a^*) }e^{-\lambda i (b-b^*) }.$$
	Since $a$ is normal it follows, from Jacobson's Lemma \cite[Lemma 3.1.2]{aup}, that
	$r_\sigma(f(\lambda))=r_\sigma(g(\lambda))$ for all $\lambda\in\mathbb C$,
	and by Vesentini's Theorem \cite[Theorem 6.4.2]{rans} the function
	$$\mathbb C:\lambda\mapsto \log r_{\sigma}(f(\lambda))$$ is subharmonic on $\mathbb C$. Since $b$ is also normal,
	and since $\rho(a,b)=\rho(b^*,a^*)=0$, one may factorize $f(\lambda)=p(\lambda)q(\lambda)$, where
	$p(\lambda)=e^{\lambda i a }e^{-\lambda i b }$ and $q(\lambda)=e^{\lambda i b^* }e^{-\lambda i a^* }$ are entire each with growth characteristics as
	discussed in Section 1, i.e. either the order is strictly less than $1$ or the order equals $1$ in which case the type equals $0$. Whichever case prevails,
	given $\epsilon>0$ arbitrary, there exists $R(\epsilon)>0$ such that for all $r>R(\epsilon)$
	\begin{equation}\label{WMIW}
	\log r_{\sigma}(f(\lambda))\leq
	\log\left(\|e^{\lambda i a }e^{-\lambda i b }\|\,\|e^{\lambda i b^* }e^{-\lambda i a^* }\|\right)\leq
	\epsilon r
	\end{equation}
	whenever $|\lambda|\leq r$. Observe now, using a compactness argument on $|\lambda|\leq R(\epsilon)$, that
	\eqref{AB} is satisfied. If $\zeta$ lies on the imaginary axis, then
	\begin{align*}
	\limsup\limits_{\lambda\rightarrow\zeta}\log r_{\sigma}(f(\lambda))&=
	\limsup\limits_{\lambda\rightarrow\zeta}\log r_{\sigma}(g(\lambda))\\&\leq
	\lim\limits_{\lambda\rightarrow\zeta}\log \left(\|e^{\lambda i (a-a^*)} \|\,\|e^{-\lambda i (b-b^*)}\|\right)\\&
	=\log \left(r_\sigma(e^{\zeta i (a-a^*)})\,r_\sigma(e^{-\zeta i (b-b^*)})\right)=0,
	\end{align*}
	so that \eqref{BDY} holds.
	Finally \eqref{WMIW}, implies that
	\[\limsup\limits_{{t\rightarrow\infty}\atop{t>0}}\frac{\log r_{\sigma}(f(t))}{t}=0.\]
	It thus follows, from Theorem~\ref{PHT}, that
	$r_{\sigma}(f(\lambda))=r_{\sigma}(g(\lambda))\leq1$ for all $\lambda\in H$. Using the same argument with $a$ and $b$ replaced by $-a$ and $-b$ respectively
	we see that $r_{\sigma}(f(\lambda))=r_{\sigma}(g(\lambda))\leq1$ for all $\lambda\in\mathbb C$. Now define an entire function
	
	\begin{displaymath}
	h(\lambda)=\left\{\begin{array}{cc}\left({e^{\lambda i (a-a^*) }e^{-\lambda i (b-b^*) }-1}\right)/{\lambda} & \mbox{ if }\lambda\not=0\\
	i(a-a^*)-i(b-b^*) & \mbox{ if }\lambda=0.\end{array}\right.
	\end{displaymath}
	Since $r_{\sigma}(g(\lambda))$ is bounded on $\mathbb C$ it follows that
	$\limsup_{|\lambda|\rightarrow\infty}r_{\sigma}(h(\lambda))=0$. But $r_{\sigma}(h(\lambda))$ is subharmonic on $\mathbb C$ and therefore, by a version of Liouville's Theorem for subharmonic functions, it must be constantly zero on $\mathbb C$. In particular, we see that $r_\sigma(i(a-a^*)-i(b-b^*))=0$. But  $i(a-a^*)-i(b-b^*)$ being self-adjoint it follows that
	$a-a^*=b-b^*$. Writing $c:=a-a^*=b-b^*$ we see that $c$ commutes with both $a$ and $b$ from which we then obtain
	$$\rho\left((a+a^*)/2,(b+b^*)/2\right)=\rho\left(a-c/2,b-c/{2}\right)=\rho(a,b)=0.$$
	So, using Corollary~\ref{SAE}, we get $a+a^*=b+b^*$ and hence that $a=b$ as advertised.
\end{proof}

Using  P\'{o}lya's result it readily follows that $a$ is self-adjoint if and only if $\rho(a^*,a)=0$ and $\sup_{n\in\mathbb Z}\|e^{ina}\|<+\infty$, and, using Theorem~\ref{spec} (ii),  that $a\in A^{-1}$ is unitary if and only if $\rho(a^*,a^{-1})=0$ and $r_\sigma\left(a^{\pm1}\right)=\left\|a^{\pm1}\right\|$. In both of the aforementioned, neither $\rho(a^*,a)=0$ nor $\rho(a^*,a^{-1})=0$ on their own is sufficient to establish $a$ self-adjoint or unitary respectively. For example, let $V$ be the Volterra operator on $L_2$. Then, since $V$ is quasinilpotent, $\rho(V^*,V)=0$ but of course $V^*\not=V$. In the second instance set $a(t)=e^{tV},\ t\in\mathbb R$ and notice that $\rho\left((a(t))^*,(a(t))^{-1}\right)=0$ for each $t\in\mathbb R$. But if the implication of this is that $a(t)$ is unitary for each real $t$, then it would mean that the Volterra operator is normal which is absurd. On the positive side Theorem~\ref{spec}, which improves on \cite[Lemma 1, Lemma 2]{frunza}, shows that
$\rho(a^*,a)=0$ implies that $a$ is at least ``spectrally self-adjoint", and $\rho(a^*,a^{-1})=0$ implies that $a$ is at least ``spectrally unitary".

\begin{theorem}\label{spec}
	Let $A$ be a $C^*$-algebra and let $a\in A$.
	\begin{itemize}
		\item[(i)]{ If $\rho(a^*,a)=0$ then $\sigma(a)\subset\mathbb R$.  }
		\item[(ii)]{ If $\rho(a^*,a^{-1})=0$ then $\sigma(a)\subseteq\{\lambda\in\mathbb C:|\lambda|=1\}$.  }
	\end{itemize}
\end{theorem}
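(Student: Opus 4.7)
The plan for (i) is to introduce the entire function $f(\lambda)=e^{\lambda a^*}e^{-\lambda a}$. By the observation at the end of Section~1, $\omega_f\le 1$, and if $\omega_f=1$ then $\tau_f=\rho(a^*,a)=0$. Either way, for every $\varepsilon>0$ there is $M_\varepsilon$ with $\|f(\lambda)\|\le M_\varepsilon e^{\varepsilon|\lambda|}$ on $\mathbb{C}$. I would then restrict to the imaginary axis: for $\lambda=it$, $t\in\mathbb{R}$,
\[f(it)=e^{ita^*}e^{-ita}=(e^{-ita})^{*}(e^{-ita}),\]
so the $C^*$-identity gives $\|f(it)\|=\|e^{-ita}\|^{2}\ge r_\sigma(e^{-ita})^{2}=\sup_{\mu\in\sigma(a)}e^{2t\,\mathrm{Im}(\mu)}$. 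Comparing with the growth bound, taking logarithms, dividing by $|t|$, and letting $t\to\pm\infty$ and then $\varepsilon\downarrow 0$ will force $\sup_\mu\mathrm{Im}(\mu)\le 0$ and $\inf_\mu\mathrm{Im}(\mu)\ge 0$, hence $\sigma(a)\subset\mathbb{R}$.

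For (ii) the plan is to bootstrap (i) via Theorem~\ref{FCT}. First apply FCT with $\varphi(z)=z+z^{-1}$, which is analytic on $\mathbb{C}\setminus\{0\}$ and hence on an open set containing $\sigma(a^*)\cup\sigma(a^{-1})$ (because $a\in A^{-1}$). This yields $\rho(a^*+a^{-*},\,a+a^{-1})=0$; setting $c_1:=a+a^{-1}$ one has $\rho(c_1^*,c_1)=0$, so part (i) gives $\sigma(c_1)\subset\mathbb{R}$, and by the spectral mapping theorem $\mu+\mu^{-1}\in\mathbb{R}$ for every $\mu\in\sigma(a)$. Next, apply FCT with $\psi(z)=z-z^{-1}$ and set $d:=a-a^{-1}$: this produces $\rho(d^*,-d)=0$, and the easy identities $\rho(\alpha x,\alpha y)=|\alpha|\rho(x,y)$ and $\rho(-x,-y)=\rho(x,y)$ rearrange this to $\rho((id)^*,id)=0$. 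A second invocation of (i) then gives $\sigma(id)\subset\mathbb{R}$, i.e.\ $\mu-\mu^{-1}\in i\mathbb{R}$ for every $\mu\in\sigma(a)$. Writing $\mu=x+iy$, these two conditions reduce to $y(1-|\mu|^{-2})=0$ and $x(1-|\mu|^{-2})=0$, so either $\mu=0$ or $|\mu|=1$; invertibility of $a$ excludes $\mu=0$.

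The main obstacle I expect is the step in (i) upgrading the a priori bound on $\|f(\lambda)\|$ to control on $\sigma(a)$. The mechanism is the $C^*$-identity $\|y^*y\|=\|y\|^2$, which turns the pairing in $f$ into a perfect square on the imaginary axis and makes $\|e^{-ita}\|^{2}$ a lower bound sharp enough to read off $\sup_\mu\mathrm{Im}(\mu)$. In (ii) the only subtle choice is to use $z\pm z^{-1}$ rather than $\log z$: the logarithm would require a branch on $\sigma(a^*)\cup\sigma(a^{-1})$, which may well separate $0$ from $\infty$ (e.g.\ if $S^1\subseteq\sigma(a)$), whereas $z\pm z^{-1}$ are analytic on $\mathbb{C}\setminus\{0\}$ and their joint spectral information rules out every $\mu$ off $S^1$ once invertibility discards the origin.
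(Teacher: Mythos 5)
Your proposal is correct. For part (i) you take essentially the paper's route: the paper works with $h(\lambda)=e^{\lambda i a^*}e^{-\lambda i a}$ and plays the growth bound coming from $\rho(ia^*,ia)=0$ against the lower bound $\|h(t)\|=\|e^{-ita}\|^2\geq e^{2\beta t}$ on the real axis; your $f(\lambda)=e^{\lambda a^*}e^{-\lambda a}$ restricted to the imaginary axis is the same function after the rotation $\lambda\mapsto i\lambda$, and the key $C^*$-identity step is identical. For part (ii) you take a genuinely different route. The paper follows Frunz\v a: from $\rho((a^{-1})^*,a)=0$ it derives by induction the identity $(a^{-n})^*a^{-n}=\sum_{j=0}^n\binom{n}{j}\left(C_{(a^{-1})^*,a}^j\mathbf 1\right)a^{-j}$ and estimates norms directly to obtain $r_\sigma(a^{-1})\leq1$ and $r_\sigma(a)\leq1$; this is self-contained and never invokes part (i). You instead bootstrap part (i) through Theorem~\ref{FCT} with $z\pm z^{-1}$, which is exactly the device the paper itself uses in Corollary~\ref{UE} to split a unitary into real and imaginary parts. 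Your version buys a shorter argument that reuses already-established machinery and correctly explains why $\log z$ is unavailable when $\sigma(a)$ surrounds the origin; the paper's version buys the explicit quantitative conclusions $r_\sigma(a^{\pm1})\leq1$ without needing the spectral-reality statement. The small steps in your reduction all check out: $\varphi(a^*)=(\varphi(a))^*$ because $z\pm z^{-1}$ have real coefficients, $\rho((id)^*,id)=\rho(-id^*,id)=|-i|\,\rho(d^*,-d)=0$, and the two conditions $\mathrm{Im}(\mu+\mu^{-1})=y(1-|\mu|^{-2})=0$ and $\mathrm{Re}(\mu-\mu^{-1})=x(1-|\mu|^{-2})=0$ force $|\mu|=1$ once $\mu=0$ is excluded by invertibility.
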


\begin{proof}
	(i) Of course $\rho(a^*,a)=0\Rightarrow\rho(ia^*,ia)=0$. Suppose
	$\alpha+\beta i\in\sigma(a)$ where $\alpha,\beta\in\mathbb R$ with $\beta\not=0$. The function $h(\lambda)=e^{\lambda i a^*}e^{-\lambda i a}$ has either $\omega_h<1$, or $\omega_h=1$.
	Assume first $\omega_h=1$ so that $\tau_f=0$ as pointed out earlier.  Since $\rho(ia^*,ia)=0$, and since $|\beta|>0$, there exists $R>0$ such that
	$\sup_{|\lambda|\leq r}\|h(\lambda)\|<e^{|\beta|r}$ for $r>R$. But, on the other hand, if $t\in\mathbb R$, then $h(t)$ is self-adjoint and it follows via the Spectral Mapping Theorem that
	\[\|h(t)\|=\|e^{-ita}\|^2\geq r_\sigma(e^{-2ita})\geq|e^{-2it(\alpha+\beta i)}|=e^{2\beta t}.\] This would imply that
	\[e^{2|\beta| r}\leq\sup_{t\in[-r,r]}\|h(t)\|\leq\sup_{|\lambda|\leq r}\|h(\lambda)\|<e^{|\beta|r}\text{ for }r>R,\]
	yielding a contradiction. If $\omega_h<1$ then (now using the definition of order) there exists $0<K<1$ and $R>0$ such that
	\[e^{2|\beta| r}\leq\sup_{t\in[-r,r]}\|h(t)\|\leq\sup_{|\lambda|\leq r}\|h(\lambda)\|<e^{r^K}\text{ for }r>R,\]
	again giving a contradiction. We conclude that $\sigma(a)\subset\mathbb R$.
	
	(ii) Apply \eqref{invol} to get $\rho(a^*,a^{-1})=0\Leftrightarrow\rho((a^{-1})^*,a)=0.$ Then follow the idea in \cite{frunza} to obtain:
	\[\left(C_{(a^{-1})^*,a}^n\mathbf 1\right)a^{-n}=\sum_{j=0}^n{n \choose j}(-1)^j(a^{j-n})^*a^{j-n}\] which, by induction, implies
	\begin{equation}\label{ind}
	(a^{-n})^*a^{-n}=\sum_{j=0}^n{n \choose j}\left(C_{(a^{-1})^*,a}^j\mathbf 1\right)a^{-j}.
	\end{equation}
	Now, let $\epsilon>0$ be arbitrary. Since $\rho((a^{-1})^*,a)=0$ there exists $K(\epsilon)>0$ such that $\|C_{(a^{-1})^*,a}^{j}\mathbf 1\|<\epsilon^{j}$ for $j>K$. Of course we can find $M(\epsilon)>1$ such that $\|C_{(a^{-1})^*,a}^{j}\mathbf 1\|<M\epsilon^{j}$ for $j\leq K$ whence \eqref{ind} gives
	\[\|(a^{-n})^*a^{-n}\|\leq M(\epsilon\|a^{-1}\|+1)^{n}\quad\text{ for }n\in\mathbb N.\]
	But $(a^{-n})^*a^{-n}$ is self-adjoint so that $\|a^{-n}\|^{2/{n}}\leq M^{1/n}(\epsilon\|a^{-1}\|+1)$ from which we obtain $r_\sigma(a^{-1})\leq(\epsilon\|a^{-1}\|+1)^{1/2}$,
	and consequently $r_\sigma(a^{-1})\leq1$. Since $\rho(a^*,a^{-1})=0$ we can similarly prove (cf. \cite{frunza}) that $r_\sigma(a)\leq1$ which establishes the result.
\end{proof}

\bibliographystyle{amsplain}

\end{document}